\documentclass[11pt]{article}

\usepackage{amsmath, amsthm, amssymb}
\usepackage{graphicx}    
\usepackage{palatino}

\usepackage{url, hyperref}

\topmargin -1.5cm        
\oddsidemargin -0.04cm   
\evensidemargin -0.04cm  
\textwidth 16.59cm
\textheight 23.04cm 
\parskip 7.2pt           


\newtheorem{thm}{Theorem}[section]

\newtheorem{cor}[thm]{Corollary}

\newtheorem{lem}[thm]{Lemma}

\title{\textbf{Finding hitting times in various graphs.}
\author{ Shravas K Rao\footnote{Work supported by Massachusetts Institute of Technology's Undergraduate Research Opportunities Program} \\ Massachusetts Institute of Technology \\ \texttt{shravas@mit.edu}}}

\begin{document}

\maketitle

\begin{abstract}
The hitting time, $h_{uv}$, of a random walk on a finite graph $G$, is the expected time for the walk to reach vertex $v$ given that it started at vertex $u$.  We present two methods of calculating the hitting time between vertices of finite graphs, along with applications to specific classes of graphs, including grids, trees, and the 'tadpole' graphs.
\end{abstract}

\textbf{keywords:} random walks, hitting time

\section{Introduction}

A random walk on a graph is a walk that begins at a particular starting vertex in which each successive step is determined by randomly choosing an edge adjacent to the previous vertex and traveling to the vertex at the other endpoint of the edge.  This random choice is distributed equally over all edges adjacent to the vertex.  For our purposes, we will consider only random walks on unweighted and undirected graphs.  Although much of the historical work on random walks considers infinite graphs, recent work has dealt more with finite graphs.  When dealing with random walks on finite graphs, the focus turns to less qualitative questions; rather than asking whether or not a random walk will return to its starting vertex, it may be interesting to ask how long the random walk would take to return to the starting vertex.

A property that arises from analyzing random walks on finite graphs is the hitting time.  Given a finite graph $G$, the hitting time, $h_{uv}$,  from a vertex $u$ to a vertex $v$, is the expected number of steps it takes for a random walk that starts at vertex $u$ to reach vertex $v$.  Note that when dealing with finite graphs, the hitting time from $u$ to $v$ is finite if and only if the vertices $u$ and $v$ are connected.

The hitting time of a random walk has many useful properties.  For example, the cover time, the expected time it takes for a random walk to visit all vertices of a graph, can be both bounded above by a function of the largest hitting time from one vertex to another, and below by a function of the smallest hitting time from one vertex to another~\cite{randomwalk}.  However, the hitting times between vertices of various graphs can be hard to analyze, and finding their values is not intuitive.  There exist a few known bounds on hitting times, as found in~\cite{c, b, Aldous, a}.  We will focus on finding exact formulas for the hitting time for certain types of graphs.

We will consider two different methods of calculating the hitting time of a graph and demonstrate their applications to various classes of graphs.  Section 2 gives an explicit formula for finding the hitting time, but only in a few specific cases.  Specifically, the formula can only be used to find the hitting time from one vertex to a neighbor and only if the graph exhibits a symmetry about the starting vertex.  This can then be applied to random walks on a variety of classes of graphs, including grids, hypercubes, and trees.

Section 3 uses the method of calculating hitting times through a system of linear equations first shown in~\cite{Aldous}.  Although this is not as convenient as a formula, this method can be applied to random walks on any graph.  We can use this to derive formulas for hitting times of random walks on graphs for which the method of Section 2 does not apply, such as for the complete $d$-ary tree, or the tadpole graph.

\section{Hitting times in graphs with symmetry}

\subsection{Proof of theorem}


The following theorem gives a formula that can, in certain cases, be used to find hitting times.

\begin{thm}\label{recurrence}
Let $v$ be a vertex of a connected graph $G$ with neighbor $u$.  If for every other neighbor of $v$, there exists an automorphism of $G$ that maps $u$ to that neighbor of $v$, then the hitting time from $u$ to $v$ is $\frac{2e}{k}-1$, where $e$ is the number of the edges in the graph, and $k$ is the number of neighbors of $v$.
\end{thm}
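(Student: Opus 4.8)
The plan is to sidestep any linear system and pin down $h_{uv}$ via the expected \emph{return} time of the walk to $v$, which has a closed form on any connected graph. Write $k=\deg(v)$ for the number of neighbors of $v$. The one non-elementary ingredient is the classical fact that the simple random walk on $G$ has stationary distribution $\pi(w)=\deg(w)/(2e)$; since $G$ is connected the chain is irreducible, so the expected time for a walk started at $v$ to return to $v$ is $1/\pi(v)=2e/k$. (If a self-contained treatment is preferred, this identity also drops out of counting visits to $v$ among the first $n$ steps and letting $n\to\infty$.)

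Next I would decompose that return time over the first step out of $v$: the walk moves to a uniformly random neighbor $w$ of $v$ and then needs in expectation $h_{wv}$ more steps, so
\[
\frac{2e}{k}=1+\frac{1}{k}\sum_{w\sim v}h_{wv}.
\]
It remains to show each summand $h_{wv}$ equals $h_{uv}$. An automorphism $\phi$ of $G$ that fixes $v$ sends each walk from $u$ first hitting $v$ at time $t$ to a walk from $\phi(u)$ first hitting $v$ at time $t$, with the same probability (automorphisms preserve degrees), so $h_{\phi(u)v}=h_{uv}$; applying the automorphism the hypothesis supplies for each neighbor $w$ of $v$ gives $h_{wv}=h_{uv}$ for all $w\sim v$. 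Hence the sum equals $k\,h_{uv}$, and substituting gives $2e/k=1+h_{uv}$, that is, $h_{uv}=\frac{2e}{k}-1$.

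Once the return-time identity is granted the argument is only a few lines, so the real ``obstacle'' is mainly deciding whether to cite that identity or reprove it. The step that genuinely needs care is the symmetry argument: the automorphism acts on whole trajectories \emph{ending at} $v$, so it must fix $v$; an automorphism sending $u$ to the prescribed neighbor but moving $v$ does not suffice, and the conclusion can then fail --- on the triangular prism, with $v$ and $u$ in a common triangular face, $h_{uv}=\tfrac{24}{5}\neq 5=\tfrac{2e}{k}-1$. I therefore read the hypothesis as providing, for each other neighbor $w$ of $v$, an automorphism that fixes $v$ and sends $u$ to $w$ (equivalently, the stabilizer of $v$ in $\mathrm{Aut}(G)$ acts transitively on the neighbors of $v$), which is exactly the situation in the intended applications to grids, hypercubes, and trees; under that reading the three steps above give a complete proof.
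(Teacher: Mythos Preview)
Your argument is correct, and it rests on the same two ingredients as the paper's proof --- the symmetry that forces $h_{wv}$ to be constant over neighbors $w$ of $v$, and a return-time identity coming from the stationary distribution --- but you package them more efficiently. The paper works instead with the induced chain on \emph{directed edges}: it considers the expected recurrence time of the ordered pair $(v,u)$, which equals $2e$ since the stationary distribution on the $2e$ directed edges is uniform, and then computes that same quantity as a geometric sum $\sum_{j\ge 1}\frac{1}{k}\bigl(\frac{k-1}{k}\bigr)^{j-1}j(x+1)=k(x+1)$, where $x=h_{uv}$. Your version applies the return-time identity directly at the vertex level, $\mathbb{E}_v[\text{return to }v]=1/\pi(v)=2e/k$, and then the one-step decomposition $2e/k=1+h_{uv}$ replaces the geometric series entirely. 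Same idea, shorter path.

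Your remark about the automorphisms needing to fix $v$ is a genuine correction to the statement as written. The paper's proof simply asserts ``because of the symmetry of the graph, the hitting time is equal from any neighbor of $v$ to $v$,'' which is exactly the step that fails without that extra condition; your triangular-prism computation ($h_{uv}=24/5$ versus $2e/k-1=5$) shows the literal hypothesis is insufficient. All of the paper's applications (grids about a corner, hypercubes, trees after pruning) do satisfy the stronger condition that the stabilizer of $v$ acts transitively on its neighbors, so your reading is the right one.
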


\begin{proof}
Because of the symmetry of the graph, the hitting time is equal from any neighbor of $v$ to $v$.  Let this value be $x$.

Now consider the random walk on $G$ that starts at vertex $v$ and moves to vertex $u$ in the next step, and the next such $t$ in which the $t$th vertex visited is $v$ and the $(t+1)$th vertex visited is $u$.  We can find the expected value of the next such $t$, both in terms of both $k$ and $x$, and in terms of $e$, allowing us to solve for $x$.  This can also be thought of as finding the recurrence time of a random walk along the edges of the directed graph $G'$, formed by replacing each edge in $G$ with two, one in each direction.

Consider the structure of the walk more closely.  After starting at $v$ and then going to $u$, in order to again return back to $v$ and then $u$, the walk must first return to $v$.  This takes an expected $x$ steps.  At this point, the walk can continue in two different ways; the walk may go to $u$, with a probability of $\frac{1}{k}$ and $1$ additional step.  It may also continue on to another vertex, in which case the walk must again return back to $v$. This adds on average, another $x+1$ to the number of steps the random walk has taken.  Continuing in this manner, we find that that the expected recurrence time is

\[\frac{1}{k}\left(x+1\right)+\frac{1}{k}\frac{k-1}{k}2\left(x+1\right)+\frac{1}{k}\frac{k-1}{k}\frac{k-1}{k}3\left(x+1\right)+\cdots\]

which simplifies to $k\left(x+1\right)$.  

It is well known that the expected recurrence time in this walk is $2e$, as there are $2e$ edges in $G'$~\cite{randomwalk}.  Setting these two values equal and solving for $x$ gives $x = \frac{2e}{k}-1$.
\end{proof}

\subsection{Applications to the grid, hypercube, and trees}

This technique can be used to find the hitting time in the following graphs. 

\begin{cor}
In a $d$-dimensional grid whose dimensions all have length $m$, the hitting time from a corner to one of its neighbors is $2\left(m-1\right)m^{d-1}-1$
\end{cor}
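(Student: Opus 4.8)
The plan is to invoke Theorem~\ref{recurrence} directly, so that the work reduces to two routine tasks: (i) checking that a corner vertex $v$ satisfies the automorphism hypothesis relative to some fixed neighbor $u$, and (ii) counting the edges of the grid. Model the $d$-dimensional grid as the graph on vertex set $\{0,1,\dots,m-1\}^d$ in which two vertices are adjacent exactly when they differ by $1$ in a single coordinate; here ``length $m$'' means $m$ vertices (and hence $m-1$ edges) along each axis, which is the convention consistent with the stated formula.

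Take $v=(0,0,\dots,0)$. Its neighbors are the $d$ standard basis vectors $\varepsilon_1,\dots,\varepsilon_d$, so the number of neighbors is $k=d$. Fix $u=\varepsilon_1$, say. For any other neighbor $\varepsilon_j$, the coordinate transposition swapping positions $1$ and $j$ (and fixing all others) is an automorphism of the grid: it preserves the relation ``differ by $1$ in exactly one coordinate,'' it fixes $v$, and it carries $u=\varepsilon_1$ to $\varepsilon_j$. Thus the hypothesis of Theorem~\ref{recurrence} is met. The step I would flag for careful statement is precisely this one, since the theorem requires an automorphism of the \emph{whole} graph rather than merely a local symmetry at $v$; everything else is arithmetic.

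Next I would count the edges. For each of the $d$ coordinate directions, an edge in that direction is obtained by choosing values for the other $d-1$ coordinates ($m^{d-1}$ choices) and joining two consecutive values in the chosen coordinate ($m-1$ such pairs per line), giving $(m-1)m^{d-1}$ edges in that direction. Since an edge changes exactly one coordinate, there is no overcounting across directions, and the total is $e=d(m-1)m^{d-1}$.

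Finally, substituting $e=d(m-1)m^{d-1}$ and $k=d$ into Theorem~\ref{recurrence} yields hitting time $\frac{2e}{k}-1=\frac{2d(m-1)m^{d-1}}{d}-1=2(m-1)m^{d-1}-1$, which is the claimed value. There is no genuine obstacle here beyond fixing the ``$m$ vertices per side'' convention and doing the edge bookkeeping correctly; the corollary is essentially an immediate specialization of the theorem.
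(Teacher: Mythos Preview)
Your proof is correct and follows exactly the paper's approach: state the edge count $e=d(m-1)m^{d-1}$ and plug into Theorem~\ref{recurrence} with $k=d$; you have merely filled in the automorphism verification (coordinate transpositions) and the edge-counting details that the paper leaves implicit. One small remark, which applies equally to the paper's own proof: with $v$ taken to be the corner, Theorem~\ref{recurrence} literally yields $h_{uv}$, the hitting time from a neighbor \emph{to} the corner, so the direction in the corollary's wording is the reverse of what the computation actually establishes.
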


\begin{proof}
There are $d\left(m-1\right)m^{d-1}$ edges in a $d$-dimensional grid whose dimensions all have length $m$.  Applying Theorem \ref{recurrence} gives the hitting time given above.
\end{proof}

In particular, the application of this technique to the hypercube gives a more general result, as every vertex is a corner.

\begin{cor}
In a $d$-dimensional hypercube, the hitting time from any vertex to one of its neighbors is $2^d-1$.
\end{cor}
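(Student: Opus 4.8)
The plan is to obtain this as an immediate consequence of Theorem~\ref{recurrence}, in the same way the grid corollary was obtained. The $d$-dimensional hypercube $Q_d$ is precisely the $d$-dimensional grid with side length $m=2$, so formally one could already read off $2(m-1)m^{d-1}-1 = 2^{d}-1$ from the previous corollary for the hitting time from a \emph{corner}. The content of the present statement is that in $Q_d$ every vertex behaves like a corner, because $Q_d$ is vertex-transitive, so the same value is the hitting time from \emph{every} vertex to each of its neighbors. Concretely, I would first record the two quantities that feed into Theorem~\ref{recurrence}: $Q_d$ has $2^{d}$ vertices, each of degree $d$, hence $e = d\,2^{d-1}$ edges, and each vertex has $k = d$ neighbors.

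The substantive step is to check the symmetry hypothesis of Theorem~\ref{recurrence} at an \emph{arbitrary} vertex $v$ with an arbitrary neighbor $u$. Identify the vertex set of $Q_d$ with $\{0,1\}^{d}$; then $u$ differs from $v$ in exactly one coordinate, say coordinate $i$, and any other neighbor $u'$ of $v$ differs from $v$ in exactly one coordinate $j \neq i$. I would exhibit the automorphism $\phi(x) = \sigma(x) \oplus a$, where $\sigma$ is the map that swaps the $i$th and $j$th coordinates of a bit-string and $a = v \oplus \sigma(v)$. Both ingredients — permuting coordinates, and XORing by the fixed vector $a$ — preserve Hamming adjacency, so $\phi$ is an automorphism of $Q_d$. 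A one-line check gives $\phi(v) = \sigma(v) \oplus v \oplus \sigma(v) = v$, and since $u$ and $v$ differ only in coordinate $i$, the strings $\sigma(u)$ and $\sigma(v)$ differ only in coordinate $j$, whence $\phi(u)$ and $\phi(v)=v$ differ only in coordinate $j$, i.e. $\phi(u) = u'$. With the hypothesis verified for every $v$, Theorem~\ref{recurrence} yields hitting time $\frac{2e}{k} - 1 = \frac{2\,d\,2^{d-1}}{d} - 1 = 2^{d}-1$ from any vertex to any neighbor.

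I do not expect a genuine obstacle here; the only point requiring care is being precise enough about the automorphisms of $Q_d$ to guarantee that the chosen $\phi$ simultaneously fixes $v$ and sends $u$ to the prescribed neighbor $u'$, rather than merely shuffling the neighbors of $v$ in some uncontrolled fashion. An alternative that sidesteps even this computation is to cite the grid corollary at $m=2$ together with the observation that the coordinate-permutation-and-flip maps act transitively on the vertices of $Q_d$, which forces all such hitting times to coincide with the corner-to-neighbor value; I would mention this as a remark but present the explicit verification above as the main argument.
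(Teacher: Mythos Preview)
Your proposal is correct and follows essentially the same route as the paper: both deduce the result from Theorem~\ref{recurrence} (the paper simply observes that the hypercube is the $m=2$ grid in which ``every vertex is a corner,'' so the previous corollary already gives $2^d-1$). Your explicit construction of the automorphism $\phi(x)=\sigma(x)\oplus a$ is more detail than the paper provides, but it is exactly the verification underlying the paper's one-line remark.
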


Additionally, this technique can be used to find the hitting times from one vertex to a neighbor in trees.  

\begin{cor}\label{tree}
The hitting time from a vertex $v$ to a neighbor $u$ in a tree is $2e-1$, where $e$ is the number of edges in the connected component containing $u$ after the edges to all other neighbors of $u$ are removed.
\end{cor}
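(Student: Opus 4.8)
The plan is to strip away the parts of the tree that the walk never reaches before it is absorbed at $u$, and then invoke Theorem~\ref{recurrence} directly. Let $G'$ be the connected component containing $u$ after the edges from $u$ to all of its neighbours other than $v$ have been deleted; by the definition used in the statement, $G'$ has exactly $e$ edges. As a connected subgraph of the tree $G$, the graph $G'$ is itself a tree, and by construction $u$ is a leaf of $G'$ whose unique neighbour is $v$. Equivalently, $G'$ is obtained from the component of $G - uv$ containing $v$ by attaching $u$ as a pendant vertex at $v$.

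The main point is to show that the hitting time from $v$ to $u$ is the same in $G$ as in $G'$. Let $T$ denote the component of $G - uv$ containing $v$, so $V(G') = V(T) \cup \{u\}$. In the tree $G$ the only neighbour of $u$ lying in $T$ is $v$, and every path from $v$ to a vertex outside $V(T) \cup \{u\}$ must pass through $u$; hence a random walk started at $v$ remains inside $T$ up to the step on which it first moves to $u$, and that step is always taken from $v$ across the edge $uv$. Deleting the other edges at $u$ changes neither the degree of $v$ nor the degree of any vertex of $T$, so the walk on $G$ and the walk on $G'$, both started at $v$, can be coupled to traverse the same sequence of vertices until they hit $u$. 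In particular their expected hitting times of $u$ coincide, and this common value is what we must compute.

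Finally, apply Theorem~\ref{recurrence} to the tree $G'$ to compute the hitting time from $v$ to $u$, so that $u$ plays the role of the destination vertex in that theorem. Since $u$ is a leaf of $G'$ it has no neighbour besides $v$, so the hypothesis of the theorem --- that every \emph{other} neighbour of the destination be the image of $v$ under some automorphism of $G'$ --- holds vacuously. The destination $u$ has $k = 1$ neighbour and $G'$ has $e$ edges, so the theorem gives a hitting time of $\frac{2e}{1} - 1 = 2e - 1$ from $v$ to $u$ in $G'$; by the previous paragraph this is also the hitting time from $v$ to $u$ in $G$, as claimed.

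I expect the coupling step in the middle paragraph to be the only part that needs care: one must argue cleanly that the deleted edges are never traversed before absorption at $u$, so that the passage from $G$ to $G'$ is legitimate. Everything else is bookkeeping --- identifying $e$ with the edge count of $G'$ and checking the vacuous hypothesis of Theorem~\ref{recurrence} --- and the degenerate cases ($u$ already a leaf of $G$, so $G' = G$; or $T$ a single vertex, so $e = 1$ and the formula correctly yields $h_{vu} = 1$) are handled by the same argument.
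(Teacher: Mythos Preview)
Your proof is correct and follows essentially the same approach as the paper's: delete the edges at $u$ not incident to $v$, observe that this cannot affect the hitting time from $v$ to $u$ since a walk from $v$ must reach $u$ before it could use any of those edges, and then apply Theorem~\ref{recurrence} to the resulting component, where $u$ is a leaf so the symmetry hypothesis holds vacuously with $k=1$. Your version is simply more explicit about the coupling and the verification of the hypothesis, but the argument is the same.
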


\begin{proof}
Removing the edges to all other neighbors of $u$ does not change the hitting time from $v$ to $u$, as it is not possible for a random walk starting at $v$ to reach those vertices without having already reached $u$.  However, this allows the hypothesis of Theorem \ref{recurrence} to hold for the connected component containing $u$, as $u$ now only has $1$ neighbor.  Finally, we can just apply the result to this modified graph.
\end{proof}

\section{Hitting time via a system of linear equations}
The second technique we will use to find hitting times can be applied to any graph, rather than graphs with some property of symmetry, but does not yield formulas as easily.  The following theorem, first shown in~\cite{Aldous} gives a set of linear equations whose solution gives the hitting times of a random walk on a graph.  If we know the general structure of the graph, then it may be possible to find the hitting times in terms of certain properties of the graph.

\begin{thm}\label{eq}
In a graph $G$, let $h_{ij}$ be the hitting time from a vertex $i$ to a vertex $j$ and let $\Gamma\left(i\right)$ be the set of neighbors of $i$.  The following set of equations, for a fixed vertex $j$ and all vertices $i$ in $G$

\[h_{ij} = \left\{
     \begin{array}{lr}
       0 &  i=j\\
       1+\frac{1}{|\Gamma\left(i\right)|}\displaystyle\sum_{k\in\Gamma\left(i\right)}{h_{kj}} &  i \neq j
     \end{array}
   \right.\]

yields a unique solution for all $h_{ij}$.
\end{thm}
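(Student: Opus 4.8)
The plan is to establish the two assertions of the statement in turn: first that the vector of genuine hitting times $\left(h_{ij}\right)_i$ satisfies the displayed system, and then that the system has no other solution. Throughout I take $G$ to be finite and connected, so that every $h_{ij}$ is finite (the introduction already records that connectedness is exactly what makes hitting times finite); without this the equations have to be read over the extended reals and the claim needs reinterpretation.

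For existence I would use a one-step (first-step) analysis. The case $i=j$ is immediate since $h_{jj}=0$. For $i\neq j$, a walk started at $i$ first moves to a neighbor $k\in\Gamma\left(i\right)$ chosen uniformly at random, and conditioned on that first move the expected number of further steps needed to reach $j$ is exactly $h_{kj}$. Averaging over the choice of $k$ gives $h_{ij}=1+\frac{1}{|\Gamma\left(i\right)|}\sum_{k\in\Gamma\left(i\right)}h_{kj}$. The only point requiring care is that this manipulation of conditional expectations is legitimate because each $h_{kj}$ is finite, which is where connectedness is used.

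For uniqueness, suppose $\left(a_i\right)$ and $\left(b_i\right)$ both solve the system, and set $d_i=a_i-b_i$. Subtracting the equations gives $d_j=0$ together with $d_i=\frac{1}{|\Gamma\left(i\right)|}\sum_{k\in\Gamma\left(i\right)}d_k$ for every $i\neq j$; that is, $d$ is harmonic for the random walk everywhere except at $j$. Now apply the maximum principle: let $i_0$ attain $M:=\max_i d_i$. If $i_0\neq j$, then $d_{i_0}$ is the average of the values of $d$ over $\Gamma\left(i_0\right)$, which forces $d_k=M$ for every neighbor $k$ of $i_0$; propagating this along paths and using that $G$ is connected shows $d\equiv M$, and in particular $d_j=M$, so $M=0$. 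If instead the maximum is attained at $j$ itself, then $M=d_j=0$ outright. Applying the same argument to $-d$ shows $\min_i d_i=0$ as well, hence $d\equiv0$, so the solution is unique.

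The main obstacle is the uniqueness half: existence is routine first-step analysis once finiteness of hitting times is noted, but uniqueness genuinely needs an argument, and the maximum principle for the averaging operator restricted to $V\left(G\right)\setminus\{j\}$ is the natural tool. I would also mention, as a remark, a purely linear-algebraic repackaging of the same fact: writing the equations for $i\neq j$ as $\left(I-P'\right)x=\mathbf{1}$, where $P'$ is the transition matrix of the walk with the row indexed by $j$ deleted, the walk from any vertex hits $j$ almost surely, so $\left(P'\right)^n\to0$ and $I-P'$ is invertible; this yields existence and uniqueness simultaneously and identifies the solution as $\sum_{n\ge0}\left(P'\right)^n\mathbf{1}$, which is exactly the probabilistic definition of the hitting times. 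The maximum-principle proof is the cleaner main line, with the linear-algebra version noted in passing.
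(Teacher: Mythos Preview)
Your proof is correct, but there is nothing to compare it to: the paper does not give its own proof of this theorem. It is stated with a citation to Aldous and then used as a black box throughout Section~3. Your argument---first-step analysis for existence, and the maximum principle for harmonic functions vanishing at $j$ for uniqueness---is the standard one and would serve perfectly well as the omitted proof. The alternative linear-algebraic packaging you mention (invertibility of $I-P'$ via $(P')^n\to 0$) is also correct and is essentially how the result is usually presented in the Markov-chain literature.
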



Note that we can use this theorem to both construct hitting times by solving the system of equations, or to show that a given formula for the all the hitting times to a certain vertex satisfy the set of equations, as they describe exactly one solution.

\subsection{Application to trees}
We first use this technique to give an alternate proof to Corollary \ref{tree}.

\begin{thm}
Let $v$ and $u$ be neighbors of a tree rooted at $u$.  Let $n$ be the number of vertices in the subtree rooted at $v$.  Then the hitting time from $v$ to $u$ is $2n-1$.
\end{thm}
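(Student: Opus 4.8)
The plan is to prove this through Theorem~\ref{eq} rather than by a direct random-walk argument: I would write down an explicit guess for \emph{all} of the hitting times $h_{wu}$ (with $w$ ranging over the whole tree and the target fixed at $u$), check that this guess satisfies the defining linear system, and then appeal to uniqueness to conclude it is correct; evaluating the guess at $w = v$ then gives the theorem.

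For the guess, root the tree at $u$ and let $s(x)$ be the number of vertices in the subtree rooted at $x$, so in particular $s(v) = n$. For a vertex $w$ with path $w = w_0, w_1, \dots, w_\ell = u$ up to the root, I would set $f(w) = \sum_{i=0}^{\ell-1}\bigl(2\,s(w_i)-1\bigr)$ and $f(u)=0$; equivalently $f(w) = f(p(w)) + 2\,s(w) - 1$, where $p(w)$ denotes the parent of $w$. The target value $h_{vu} = f(v) = 2\,s(v)-1 = 2n-1$ falls out once the guess is validated, since the path from $v$ to $u$ is just the single edge $vu$.

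To validate it, I would check the two cases of Theorem~\ref{eq}. The case $i=j$ is immediate since $f(u)=0$. For $w \neq u$, split $\Gamma(w)$ into the parent $p(w)$ and the children $c_1,\dots,c_m$, where $|\Gamma(w)| = m+1$. Reading $f$ off the root-paths gives $f(p(w)) = f(w) - (2\,s(w)-1)$ and $f(c_j) = f(w) + (2\,s(c_j)-1)$, so that after substituting $s(w) = 1 + \sum_{j=1}^m s(c_j)$ the sum telescopes to $\sum_{y\in\Gamma(w)} f(y) = (m+1)f(w) - (m+1)$. Then $1 + \tfrac{1}{|\Gamma(w)|}\sum_{y\in\Gamma(w)} f(y) = f(w)$, which is exactly the second equation; hence $f$ solves the system and, by uniqueness, equals the hitting-time function.

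I expect the only delicate part to be this last computation: getting the parent/children split right and applying $s(w) = 1 + \sum_j s(c_j)$ at the correct moment so that both the coefficient of $f(w)$ and the additive constant come out exactly as needed. The underlying idea — that Theorem~\ref{eq} pins down the hitting times to $u$ from all vertices simultaneously, so one must guess the whole profile, and that the right guess is the telescoping sum of $2\,s(x)-1$ terms along root-paths — is the conceptual crux but is not hard once seen.
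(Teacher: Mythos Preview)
Your proposal is correct, and the verification that $\sum_{y\in\Gamma(w)} f(y) = (m+1)f(w) - (m+1)$ goes through exactly as you outline once one substitutes $\sum_j s(c_j) = s(w)-1$. The approach, however, is genuinely different from the paper's. The paper proves the statement by induction on $n$: it applies Theorem~\ref{eq} only at the single vertex $v$, writes $h_{ju} = h_{jv} + h_{vu}$ for each child $j$ of $v$ (using that any walk from $j$ to $u$ must pass through $v$), invokes the inductive hypothesis for $h_{jv} = 2n_j - 1$, and then solves the resulting linear equation for $h_{vu}$. You instead guess the entire profile $(h_{wu})_{w}$ at once---as a telescoping sum of $2s(\cdot)-1$ along the root-path---and verify the full linear system of Theorem~\ref{eq} in one shot, appealing to uniqueness rather than induction. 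Your route proves a strictly stronger statement (the hitting time from \emph{every} vertex to $u$, not just from $v$) and avoids the re-rooting implicit in the paper's inductive step, at the cost of having to produce the global guess up front; the paper's route is more incremental and never needs the closed form for $h_{wu}$ at non-neighbors of $u$.
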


\begin{proof}
This can be proven using induction.  The base case where $n$ is $1$ can be checked by calculation.

By Theorem \ref{eq}, the following must hold

\[h_{vu} = 1+\frac{1}{|\Gamma\left(i\right)|}\displaystyle\sum_{j\in\Gamma\left(i\right)}{h_{ju}}.\]

One of the neighbors of $v$ is $u$, in which case the hitting time to $u$ is $0$.  For every other neighbor of $v$, $j$, the hitting time from $j$ to $v$ can also be expressed as $h_{jv}+h_{vu}$.  Letting $n_j$ be the number of nodes in the subtree rooted at $j$, by the inductive hypothesis, $h_{jv} = 2n_j-1$.  We are left with the following equation

\[h_{vu} = 1+\frac{1}{|\Gamma\left(i\right)|}\left(\displaystyle\sum_{j\in\Gamma\left(i\right)-\{u\}}{2n_j-1}\right)+\frac{|\Gamma\left(i\right)|-1}{|\Gamma\left(i\right)|}h_{vu}.\]

Solving for $h_{vu}$ gives

\[h_{vu} = 1+\displaystyle\sum_{j\in\Gamma\left(i\right)-\{u\}}{2n_j}.\]

Because $1+\displaystyle\sum_{j\in\Gamma\left(i\right)-\{u\}}{n_j}$ is the number of vertices in the subtree rooted at $v$, we can conclude that $h_{vu} = 2n-1$, completing the inductive step.
\end{proof}

\subsection{Application to the tadpole graph}
We can also use these ideas to find the hitting time in a "tadpole" graph, a graph which consists of a cycle attached to a line.  

\begin{thm}
Let $v$ be vertex with the largest distance from the cycle.  The hitting time from $v$'s neighbor $u$ to $v$ is $2l+2k-1$, where $k$ is the number of vertices in the cycle, and $l$ is the number of vertices in the line, not including $a$.
\end{thm}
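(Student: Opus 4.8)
The plan is to apply Theorem~\ref{eq} with $v$ taken as the target vertex and then solve the resulting linear system by exploiting the very rigid structure of the tadpole. First I would fix notation: write the cycle as $c_0, c_1, \dots, c_{k-1}$ with indices read mod $k$, where $c_0$ is the vertex at which the line is attached, and write the line as $c_0 = p_0, p_1, \dots, p_l = v$, so that $u = p_{l-1}$ when $l \ge 2$ and $u = c_0$ when $l = 1$. Put $a_i = h_{p_i v}$ and $b_i = h_{c_i v}$; note $a_0 = b_0$ and $a_l = 0$. The goal is to compute $a_{l-1}$.

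Next I would write down the equations supplied by Theorem~\ref{eq}. Every line vertex $p_i$ with $1 \le i \le l-1$ has degree $2$, so $a_i = 1 + \tfrac12(a_{i-1} + a_{i+1})$, which rearranges to the constant second difference relation $(a_{i+1} - a_i) - (a_i - a_{i-1}) = -2$. Likewise every cycle vertex $c_i$ with $1 \le i \le k-1$ has degree $2$ and yields the same constant second difference relation for $(b_i)$ around the cycle. The junction vertex $c_0$ has degree $3$ and contributes the single gluing equation $b_0 = 1 + \tfrac13(b_1 + b_{k-1} + a_1)$.

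Then I would solve. The reflection of the cycle that fixes $c_0$ and swaps $c_i \leftrightarrow c_{k-i}$ is an automorphism of $G$, so $b_i = b_{k-i}$; feeding this into the constant second difference relation together with the wrap-around closure (the first differences around the cycle must sum to $0$) pins down $b_i = b_0 + i(k-i)$, and in particular $b_1 = b_{k-1} = b_0 + (k-1)$. On the line, the constant second difference relation expresses every $a_i$ in terms of $a_0$ and $a_1$, and imposing $a_l = 0$ gives one relation between $a_0$ and $a_1$; substituting the values of $b_1, b_{k-1}$ into the gluing equation at $c_0$ (and using $b_0 = a_0$) gives a second. Solving these two simultaneously yields closed forms for all the $a_i$ and $b_i$; reading off $a_{l-1}$ gives $h_{uv} = 2l + 2k - 1$, after which the degenerate cases $l = 1$ and small $k$ are checked directly.

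The main obstacle is handling the degree-$3$ junction $c_0$ cleanly: the cycle recurrences are homogeneous and under-determined on their own, and it is exactly the wrap-around closure together with the gluing equation that fixes all the free constants, so the bookkeeping of how the line and the cycle constrain one another through $c_0$ is where care is needed. (I should also note that, since $v$ has a single neighbour, the hypothesis of Theorem~\ref{recurrence} holds vacuously and already gives $h_{uv} = 2e - 1 = 2(k+l) - 1$ immediately; the point of the linear-equations computation here is that the same work simultaneously produces $h_{c_i v}$ and $h_{p_i v}$ for all the remaining vertices, which Theorem~\ref{recurrence} cannot reach.)
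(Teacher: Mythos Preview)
Your proposal is correct and follows essentially the same route as the paper: both arguments set up the Theorem~\ref{eq} system, solve the second-difference recurrences along the line and along the cycle separately (leaving one or two free parameters), and then close the system with the degree-$3$ equation at the junction vertex. Your version is in fact a bit tidier---the formula $b_i = b_0 + i(k-i)$ handles both parities of $k$ at once, whereas the paper splits into even and odd cases---and your observation that Theorem~\ref{recurrence} already yields the headline value $2(k+l)-1$ is a nice aside the paper does not make.
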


\begin{proof}

We start by characterizing the hitting times of the vertices on the line.

\begin{lem}

Let the distance from a vertex $u'$ on the line to $v$ be $l$.  Then the hitting from $u'$ to $v$ is $lh_{uv}-l\left(l-1\right)$.

\end{lem}

\begin{proof}

This can be shown using induction.  The base cases, $l=0, 1$ are can be checked by calculation.  Assume the above holds true for all $l$, up to $i$.  Using Theorem \ref{eq}, we know that

\[h_{iv} = \frac{1}{2}\left(h_{\left(i+1\right)v}+h_{\left(i-1\right)v}\right)+1.\]

Replacing the values for $h_{iv}$ and $h_{\left(i-1\right)v}$, we have that

\[h_{uv}-l\left(l-1\right) = \frac{1}{2}\left(h_{\left(i+1\right)v}\left(l-1\right)lh_{uv}-\left(l-1\right)\left(l-2\right)\right)+1.\]

Solving for $h_{\left(i+1\right)v}$ gives the desired result.

\end{proof}

We can then continue by characterizing the hitting times of the vertices on the cycle

\begin{lem}
If $k$ is even, then let $v'$ be the vertex on the cycle farthest away from $v$.  Then, for any vertex on the cycle, $w$, $h_{wv} = h_{v'v}-l^2$, where $l$ is the distance from $w$ to $v'$.

If $k$ is odd, then let $v'$ and $v''$, be the vertices on the cycle farthest away from $v$.  Then, for any vertex on the cycle, $w$, $h_{wv} = h_{v'v}-l\left(l+1\right)$, where $l$  is the smaller of the two distances from $w$ to $v'$ and $v''$.
\end{lem}

\begin{proof}
Both cases can be shown using induction.  When $k$ is even, the base case where $l=0$ can be checked by calculation.  Assume the lemma holds true for all $l$ up to $i$.  Then by Theorem \ref{eq}, we have that

\[h_{iv} = \frac{1}{2}\left(h_{\left(i+1\right)v}+h_{\left(i-1\right)v}\right)+1.\]

Replacing the values for $h_{iv}$ and $h_{\left(i-1\right)v}$, we have that

\[h_{v'v}-i^2 = \frac{1}{2}\left(h_{\left(i+1\right)v} + h_{v'v}-\left(i-1\right)^2\right)+1.\]

Solving for $h_{\left(i+1\right)v}$ gives the desired result.

When $k$ is odd, by a symmetry argument, $h_{v'v} = h_{v''v}$.  Therefore, by Theorem \ref{eq}, if $l = 1$, then we have that

\[h_{v'v} = \frac{1}{2}\left(h_{v''v}+h_{wv}\right)+1,\]

and therefore $h_{wv} = h_{v'v}-2$.

The rest of the proof uses an induction argument identical to that for when $k$ is even.
\end{proof}

Finally, we can take advantage of the two characterizations to solve for $h_{uv}$.  First, consider the vertex on both the cycle, and the line, $c$.  The above lemmas show that the neighbors of $c$ have hitting times of $h_{cv}+k-1, h_{cv}+k-1$, and $h_{cv}-h_{uv}+2\left(l-1\right)$.  Again, by Theorem \ref{eq}, we have that

\[h_{cv} = \frac{1}{3}\left(h_{cv}+k-1\right)+\frac{1}{3}\left(h_{cv}+k-1\right)+\frac{1}{3}\left(h_{cv}-h_{uv}+2\left(l-1\right)\right)+1.\]

Solving for $h_{uv}$ gives $h_{uv} = 2k+2l-1$, as desired.

\end{proof}

\subsection{Application to complete $d$-ary trees}
In other cases, we can use Theorem \ref{eq}, to find hitting times between vertices that are not neighbors.  Consider the complete $d$-ary tree.  

To describe the hitting times between two vertices in a complete $d$-ary tree, we start by describing the hitting time from any vertex to the root.  By symmetry arguments, it follows that the hitting time depends only on the distance away from the root.  To aid in describing these hitting times, we define the following polynomial

\[f_n(d) =  \left\{
     \begin{array}{lr}
       0 & n=0\\
       \left(\displaystyle\sum_{i=0}^{n-1}\left(2n-2i\right)d^i\right)-n & \text{o.w.}
     \end{array}
   \right.\]

for nonnegative integers $n$.

Now, we claim the following

\begin{lem}\label{f}
The hitting time for a vertex of distance $l$ away from the root to the root in a complete $d$-ary tree of height $h$ is

\[f_h\left(d\right)-f_{h-l}\left(d\right).\]
\end{lem}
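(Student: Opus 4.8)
The plan is to write the hitting time from a vertex at distance $l$ to the root as a sum of single-edge hitting times, each of which is already supplied by the tree results of Section~2, and then to identify that sum with $f_h(d)-f_{h-l}(d)$ through an elementary manipulation of the polynomials $f_n$. Throughout, write $N_m := 1+d+\cdots+d^m = \tfrac{d^{m+1}-1}{d-1}$ for the number of vertices in a complete $d$-ary tree of height $m$, and write $H_l$ for the quantity to be computed (the hitting time from distance $l$ to the root), reserving $h$ for the height.

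First I would observe that the subtree hanging below a vertex at distance $j$ from the root of a complete $d$-ary tree of height $h$ is itself a complete $d$-ary tree, of height $h-j$, and hence has $N_{h-j}$ vertices. By Corollary~\ref{tree} (equivalently, the ``$2n-1$'' subtree formula established just above it), the hitting time from a vertex at distance $j$ to its parent at distance $j-1$ equals $2N_{h-j}-1$. Next I would use the fact that a complete $d$-ary tree is a tree: a walk started at a vertex $w$ of distance $l$ must pass, in order, through the ancestors of $w$ at distances $l-1,l-2,\dots,0$ before it can reach the root, since deleting $w$'s parent edge separates $w$ from the root. By the strong Markov property and linearity of expectation, the hitting time therefore telescopes:
\[H_l \;=\; \sum_{j=1}^{l}\bigl(2N_{h-j}-1\bigr) \;=\; 2\sum_{m=h-l}^{h-1}N_m \;-\; l,\]
with the convention that this reads as $0$ when $l=0$ and that the inner sum begins at $m=0$ when $l=h$.

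Finally I would invoke the identity $f_n(d) = 2\sum_{m=0}^{n-1}N_m - n$, valid for all $n\ge 0$ (the case $n=0$ giving $0=0$), which follows by swapping the order of summation: $2\sum_{m=0}^{n-1}N_m = 2\sum_{m=0}^{n-1}\sum_{k=0}^{m}d^k = 2\sum_{k=0}^{n-1}(n-k)d^k = \sum_{i=0}^{n-1}(2n-2i)d^i = f_n(d)+n$. Subtracting the instances at $n=h$ and $n=h-l$ gives $f_h(d)-f_{h-l}(d) = 2\sum_{m=0}^{h-1}N_m - 2\sum_{m=0}^{h-l-1}N_m - l = 2\sum_{m=h-l}^{h-1}N_m - l = H_l$, which is the asserted formula.

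There is no genuinely hard step here; the points that need care are justifying the telescoping decomposition cleanly (the strong Markov property on the finite tree, so that the hitting time to the root splits as a sum of hitting times across consecutive ancestor edges) and reading the boundary cases $l=0$ and $l=h$ so that the empty sums are interpreted correctly. If one wishes to avoid appealing to Section~2 entirely, an equivalent route is to check directly that the proposed formula $h_{l}=f_h(d)-f_{h-l}(d)$ satisfies the linear system of Theorem~\ref{eq} with the root as target: after setting $n=h-l$, the equation at a non-root, non-leaf vertex of distance $l$ (one parent, and $d$ children which share the common hitting time $H_{l+1}$ by symmetry) reduces to the polynomial recurrence $f_{n+1}(d) = (d+1)f_n(d) - d\,f_{n-1}(d) + (d+1)$, the leaf equation reduces to $f_1(d)=1$, and the root equation is the tautology $f_h(d)-f_h(d)=0$; uniqueness in Theorem~\ref{eq} then finishes. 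Verifying that polynomial recurrence from the closed form of $f_n$ would be the main (routine) computation along this alternative.
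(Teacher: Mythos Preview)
Your proof is correct, and your primary argument takes a genuinely different route from the paper's. You compute the hitting time constructively: decompose the walk from depth $l$ to the root into single-edge ascents, invoke the $2n-1$ subtree formula of Corollary~\ref{tree} to get each step as $2N_{h-j}-1$, and then match the telescoped sum to $f_h-f_{h-l}$ via the clean identity $f_n(d)=2\sum_{m=0}^{n-1}N_m-n$. The paper instead does exactly what you sketch as your alternative: it plugs the candidate $f_h-f_{h-l}$ into the equations of Theorem~\ref{eq} and verifies the leaf case $f_1=1$ and the interior recurrence $(d+1)(f_n+1)=d\,f_{n-1}+f_{n+1}$ (equivalent to your $f_{n+1}=(d+1)f_n-d\,f_{n-1}+(d+1)$), appealing to uniqueness. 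Your approach is more explanatory, showing where the polynomial $f_n$ actually comes from, at the price of importing the Section~2 result; the paper's approach keeps the argument self-contained within the Section~3 framework but is purely a post-hoc verification with no hint of how one would discover $f_n$.
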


\begin{proof}

For convenience we will let $f_h$ denote $f_h(d)$.  By Theorem \ref{eq}, it is sufficient to show both

\[f_h-f_{0} = 1+f_h-f_1\]

and

\[f_h-f_{h-l} = 1+\frac{1}{d+1}\left(f_h-f_{h-l+1}\right) + \frac{d}{d+1}\left(f_h-f_{h-l-1}\right)\]

when $l < h$, as these describes all possible equations.  Because $f_1 = 1$, the first equation holds.

Simplifying the second equation, we see that

\[\left(d+1\right)\left(f_{h-l}+1\right) = d\left(f_{h-l-1}\right)+f_{h-l+1}.\]

The following holds for $n$ greater than $1$

\[f_n = df_{n-1}+\left(n-1\right)d+n.\]

We can use this to see that

\[\left(d+1\right)\left(f_{h-l}+1\right) = f_{h-l}-\left(h-l-1\right)d+h-l-1+df_{h-1}+\left(h-l\right)d+h-1.\]

\end{proof}

To continue finding hitting times we calculate the hitting time from each ancestor of a leaf, to that leaf.  To do so, we define the following polynomial

\[g_{k, m}\left(d\right) =  \left\{
     \begin{array}{lr}
       0 & m=0\\
       \left(\displaystyle\sum_{i=0}^{m-1}\left(2m-2i\right)d^{k-i}\right)-m & \text{o.w.}

     \end{array}
   \right.\]

for positive integers $k$ and nonnegative integers $m$.  We claim the following

\begin{lem}\label{g}
Let $v$ be a leaf in a complete $d$-ary tree of height $h$.  The hitting time from the ancestor of distance $l$ from $v$, to $v$, is

\[g_{h, h}\left(d\right)-g_{h, h-l}\left(d\right).\]
\end{lem}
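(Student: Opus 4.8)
The plan is to invoke Theorem \ref{eq}: the equations for the hitting times to the fixed leaf $v$ have a unique solution, so it suffices to produce a solution and read off its values on the ancestors of $v$. Write $v=w_0,w_1,\dots,w_h$ for the path from $v$ up to the root, so that $w_l$ is the ancestor at distance $l$ and the subtree rooted at $w_l$ is itself a complete $d$-ary tree, of height $l$. For an arbitrary vertex $x$, let $w_l=\mathrm{lca}(x,v)$; every walk from $x$ to $v$ must pass through $w_l$, and the part of the walk before its first visit to $w_l$ stays inside the subtree rooted at $w_l$, so $h_{xv}=h_{xw_l}+h_{w_lv}$, where $h_{xw_l}$ is a hitting time to the root of a height-$l$ complete $d$-ary tree and is therefore given by Lemma \ref{f}. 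This makes the natural candidate solution $h_{xv}=\bigl(f_l(d)-f_{l-\mathrm{dist}(x,w_l)}(d)\bigr)+\bigl(g_{h,h}(d)-g_{h,h-l}(d)\bigr)$ with $w_l=\mathrm{lca}(x,v)$; when $x=w_l$ this collapses to $g_{h,h}(d)-g_{h,h-l}(d)$, matching the statement, so the task is to check that this candidate satisfies every equation of Theorem \ref{eq}.

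I would then split the check by where $x$ sits. If $x$ lies strictly inside one of the off-path subtrees (in particular if $x$ is a non-$v$ leaf), then $x$ and all of its neighbours share the same least common ancestor with $v$ — except for the one neighbour that equals that ancestor, whose decomposition is still consistent — so the equation at $x$ is exactly the equation at the corresponding vertex of the height-$l$ subtree shifted by the additive constant $g_{h,h}(d)-g_{h,h-l}(d)$, and that equation was already verified inside the proof of Lemma \ref{f}. The equation at $x=v$ is $0=0$. Hence the only equations with real content are those at the ancestors $w_l$. At an internal ancestor $w_l$ ($1\le l\le h-1$, of degree $d+1$) the neighbours are $w_{l+1}$, $w_{l-1}$, and $d-1$ further children $c$, each with $h_{cv}=f_l(d)-f_{l-1}(d)+h_{w_lv}$ by Lemma \ref{f}; substituting the candidate and clearing denominators turns this into the second-difference identity $g_{h,h-l+1}(d)-2g_{h,h-l}(d)+g_{h,h-l-1}(d)=(d+1)+(d-1)\bigl(f_l(d)-f_{l-1}(d)\bigr)$, while at the root $w_h$ (degree $d$) one gets $g_{h,1}(d)=d+(d-1)\bigl(f_h(d)-f_{h-1}(d)\bigr)$.

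Finishing is then routine: directly from the defining sums one reads off $f_n(d)-f_{n-1}(d)=2\frac{d^n-1}{d-1}-1$ and $g_{h,m}(d)-g_{h,m-1}(d)=2\sum_{i=h-m+1}^{h}d^{\,i}-1$, whence the left side of the internal identity equals $2d^{\,l}$ and so does the right side, and the root identity reduces to $2d^{\,h}-1=2d^{\,h}-1$. I expect the real care to go not into this algebra but into the structural bookkeeping of the previous paragraph: keeping straight that the subtree rooted at $w_l$ has height $l$, handling the neighbour that coincides with $\mathrm{lca}(x,v)$ correctly so that the off-path equations genuinely collapse to Lemma \ref{f}, and lining up the indices of $f$ and $g$. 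An alternative route that avoids Theorem \ref{eq} altogether would be to write $h_{w_lv}=\sum_{j=1}^{l}h_{w_jw_{j-1}}$, evaluate each $h_{w_jw_{j-1}}$ from Corollary \ref{tree} by counting the edges lying outside the subtree rooted at $w_{j-1}$, and telescope; this should reproduce the same closed form and may in fact be the shorter proof.
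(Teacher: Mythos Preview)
Your proposal is correct and is essentially the paper's own proof: both reduce the verification of Theorem~\ref{eq} to the equations at the ancestors $w_l$ (using the decomposition $h_{xv}=h_{xw_l}+h_{w_lv}$ together with Lemma~\ref{f} for the off-path vertices), treat the root separately, and then check the resulting identities $g_{h,1}=d+(d-1)(f_h-f_{h-1})=2d^h-1$ and $g_{h,h-l+1}-2g_{h,h-l}+g_{h,h-l-1}=(d+1)+(d-1)(f_l-f_{l-1})=2d^{\,l}$. Your write-up is in fact more explicit than the paper's about why the off-path equations collapse to those of Lemma~\ref{f}, and your indexing (with $l$ the distance to $v$ and the subtree at $w_l$ of height $l$) is cleaner than the paper's, which conflates the distance with the second index of $g$.
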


\begin{proof}
If a vertex is not an ancestor of $v$, then any random walk starting at that vertex must pass through an ancestor of $v$ before reaching $v$.  The hitting time can be calculated by adding the relevant hitting times given by Lemma \ref{f} and this theorem.  Therefore, it is sufficient to only consider the equations that describe the hitting time from an ancestor to $v$.

Because the degree of the root of a complete $d$-ary tree is different from the degree of all other non-leaf vertices, we treat this case separately.

The neighbors of the root include one vertex that is an ancestor of $v$, and $d-1$ other vertices.  For the latter, any random walk originating at any of these vertices must pass through to the root in order to reach $v$.  Therefore by Theorem~\ref{eq} and the previous lemma, it is necessary that the following holds

\[g_{h, h}-g_{h, 0} = 1+\frac{1}{d}\left(g_{h, h}-g_{h, 1}\right)+\frac{d-1}{d}\left(g_{h, h}+f_h-f_{h-1}\right).\]

Because $g_{h, 0} =0$, this simplifies to

\[0 = d-g_{h, 1}+\left(d-1\right)\left(f_h-f_{h-1}\right).\]

It can be seen that $\left(d-1\right)\left(f_h-f_{h-1}\right)+d = 2d^h-1$ for all positive $h$, and therefore we have that

\[g_{h, 1} = 2d^h-1,\]

which is true by our definition of $g$.

We continue for all other ancestors of $v$. Let the distance from an ancestor of $v$ to $v$ be $k$.  For these vertices, there are $d+1$ neighbors to consider: $1$ ancestor of $v$ closer to $v$, $1$ ancestor of $v$ farther away, and $d-1$ additional neighbors.  Again, any random walk starting from any vertex in the latter group must pass through this ancestor to reach $v$.  It is necessary that the following holds

\[g_{h, h}-g_{h, k} = 1+\frac{1}{d+1}\left(g_{h, h}-g_{h, k+1}\right)+\frac{1}{d+1}\left(g_{h, h}-g_{h, k-1}\right)+\frac{d-1}{d+1}\left(g_{h, h}-g_{h, k}+f_{h-k}-f_{h-k-1}\right)\]

\[g_{h, k+1}+g_{h, k-1}-2g_{h, k} = d+1+\left(d-1\right)\left(f_{h-k}-f_{h-k-1}\right).\]

Note that $g_{h, k+1}+g_{h, k-1}-2g_{h, k} = 2d^{h-k}$, as each term in $g_{h, k}$ is the average of the corresponding terms in $g_{h, k+1}$ and $g_{h, k-1}$, except for the $d^{h-k}$ term.  The right hand side can be simplified using the same identity used in the previous lemma.  Therefore, we get

\[2d^{h-k} = 2d^{h-k}-1+1,\]

which holds true.

We have shown that all equations in Theorem \ref{eq} referring to the hitting time from an ancestor of $v$ to $v$ hold.  As stated previously, this is enough to prove the lemma.
\end{proof}

This gives all the tools needed to find the hitting time from any vertex to any other vertex in a complete $d$-ary tree.

\begin{thm}
Let $u$ and $v$ be two vertices in a complete $d$-ary tree, with a least common ancestor of $c$.  Let $u'$, $v'$ and $c'$ be the distances of $u$, $v$, and $c$ respectively, to the root, and let $h$ be the height of the tree.  Then the hitting time from $u$ to $v$ is

\[f_{h-c'}\left(d\right)-f_{h-u'}\left(d\right)+g_{h, v'}\left(d\right)-g_{h, c'}\left(d\right).\]
\end{thm}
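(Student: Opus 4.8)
The plan is to decompose the walk at the least common ancestor $c$ and then invoke Lemmas~\ref{f} and~\ref{g}. The starting observation is that in a tree $c$ separates $u$ from $v$, so every walk from $u$ to $v$ must visit $c$; consequently hitting times are additive, $h_{uv} = h_{uc} + h_{cv}$, and it suffices to evaluate the two pieces separately.

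For $h_{uc}$ I would observe that a walk started at $u$ cannot leave the subtree rooted at $c$ before reaching $c$, so $h_{uc}$ is unchanged if we delete everything outside that subtree. What remains is exactly a complete $d$-ary tree of height $h-c'$ with $c$ as its root (every internal vertex keeps degree $d+1$, every leaf keeps degree $1$), and $u$ lies at distance $u'-c'$ from this root. Lemma~\ref{f} then gives $h_{uc} = f_{h-c'}(d) - f_{(h-c')-(u'-c')}(d) = f_{h-c'}(d) - f_{h-u'}(d)$, which supplies the first two terms of the claimed formula.

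For $h_{cv}$ I would pass to a descendant leaf. Fix any leaf $v_0$ in the subtree rooted at $v$; since the tree is complete, $v_0$ exists and sits at depth $h$ (take $v_0 = v$ if $v$ is already a leaf). The walk from $c$ to $v_0$ must pass through $v$, so $h_{cv_0} = h_{cv} + h_{vv_0}$, hence $h_{cv} = h_{cv_0} - h_{vv_0}$. Now $c$ is an ancestor of the leaf $v_0$ at distance $h-c'$ and $v$ is an ancestor of $v_0$ at distance $h-v'$, so Lemma~\ref{g} yields $h_{cv_0} = g_{h,h}(d) - g_{h,c'}(d)$ and $h_{vv_0} = g_{h,h}(d) - g_{h,v'}(d)$; subtracting gives $h_{cv} = g_{h,v'}(d) - g_{h,c'}(d)$. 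Adding the two pieces produces the claimed expression, and the degenerate cases $u=c$, $v=c$, or $u=v$ are automatically consistent because $f_0 = g_{h,0} = 0$.

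The step needing the most care — more a point to argue cleanly than a genuine obstacle — is the reduction underlying $h_{uc}$: one must check that restricting attention to the subtree rooted at $c$ leaves the relevant equations of Theorem~\ref{eq} unchanged (the walk is absorbed at $c$, and every other vertex of that subtree retains its degree), so that Lemma~\ref{f} applies to the restricted tree verbatim. The additivity $h_{uv}=h_{uc}+h_{cv}$ rests on the same kind of argument. Everything past that is bookkeeping with the depths $u'$, $v'$, and $c'$.
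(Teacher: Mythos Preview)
Your proof is correct and follows essentially the same route as the paper: both arrive at $h_{uv}=h_{uc}+h_{cl}-h_{vl}$ for a leaf $l$ below $v$ (you reach it in two additivity steps, the paper in one line) and then read off the pieces from Lemmas~\ref{f} and~\ref{g}. You are in fact more careful than the paper about why Lemma~\ref{f} applies to $h_{uc}$, justifying the subtree reduction that the paper simply asserts.
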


\begin{proof}
Let $l$ be an arbitrary leaf descended from $v$.  Then any random walk from $u$ to $l$ must also go through $c$ and $v$.  Therefore,

\[h_{ul} = h_{uc}+h_{cl} = h_{uv}+h_{vl}.\]

Then, we can express $h_{uv}$ as follows

\[h_{uv} = h_{uc}+h_{cl}-h_{vl}.\]

Lemma \ref{f} gives the value of $h_{uc}$, and Lemma \ref{g}, gives the values of both $h_{cl}$ and $h_{vl}$.  Replacing these values into the above equation gives the formula for hitting times as stated.
\end{proof}

\bibliographystyle{amsplain}
\bibliography{refHittingTime}		
\end{document}